\theoremstyle{plain}
\newtheorem{thm}{Theorem}[section]
\newtheorem{lem}[thm]{Lemma}
\newtheorem{prop}[thm]{Proposition}
\theoremstyle{definition}
\newtheorem{dfn}[thm]{Definition}
\newtheorem{prob}[thm]{Problem}
\newtheorem{ex}[thm]{Example}
\def\dnfo{\;\raise.2em\hbox{$\mathrel|\kern-.9em\lower.4em\hbox
{$\smile$}$}}
\def\dnf#1{\lower.9em\hbox{$\buildrel\dnfo\over{ \scriptstyle  #1}$}}
\def\dfo{\;\raise.2em\hbox{$\mathrel|\kern-.9em\lower.4em\hbox{$\smile$}
\kern-.72em\lower.07em\hbox{\char'57}$}\;}
\def\df#1{\lower1em\hbox{$\buildrel\dfo\over{\scriptstyle #1}$}}
\newcommand{\sC}{\mathcal{C}}
\newcommand{\sD}{\mathcal{D}}
\newcommand{\sP}{\mathcal{P}}
\newcommand{\C}{\mathbb{C}}
\newcommand{\rat}{\mathbb{Q}}
\title{PAC Learning, VC Dimension, and the Arithmetic Hierarchy}
\author{Wesley Calvert}
\address{Department of Mathematics\\ Mail Code 4408\\
Southern Illinois University, Carbondale\\
1245 Lincoln Drive\\
Carbondale, Illinois 62901}
\email{wcalvert@siu.edu}
\date{\today}
\begin{document}

\begin{abstract}
We compute that the index set of PAC-learnable concept classes is
$m$-complete $\Sigma^0_3$ within the set of indices for all concept
classes of a reasonable form.  All concept classes considered are
computable enumerations of computable $\Pi^0_1$ classes, in a sense
made precise here.  This family of concept classes is sufficient to
cover all standard examples, and also has the property that PAC
learnability is equivalent to finite VC dimension.
\end{abstract}

\maketitle

\section{Introduction}

A common method to characterize the complexity of an object is to
describe the degree of its index set
\cite{ecl2,d2eqs,d2apg,idxsets,idxpi01,gk,melnikovnies}.  In the present paper, we
carry out this computation for the class of objects which are
machine-learnable in a particular model.

There have been several models of machine learning, dating back at
least to Gold's seminal 1967 paper \cite{gold1967}.  In Gold's basic model,
the goal is that the machine should determine a $\Sigma^0_1$-index
for a computably enumerable set of natural numbers --- that is, an index for a computable function enumerating
it, by receiving an initial segment of the string.  Of course, many
variations are possible, involving, for instance, the receipt of positive or negative
information and the strength of the convergence criteria in the task
of ``determining'' an index.  This family of models has been studied
by the recursion theory community (see, for instance,
\cite{friendgoetheharizanov,harizanovstephan,stephanventsov}), but is
not the primary focus of this paper.  One particular result, however,
is of interest to us.

\begin{thm}[Beros \cite{Beros}] The set of $\Sigma^0_1$ indices for
  uniformly computably enumerable families learnable in each of the following
  models is $m$-complete in the corresponding class.
\begin{enumerate}
\item TxtFin --- $\Sigma^0_3$
\item TxtEx --- $\Sigma^0_4$
\item TxtBC --- $\Sigma^0_5$
\item TxtEx$^*$ --- $\Sigma^0_5$
\end{enumerate}\end{thm}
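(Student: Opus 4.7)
The plan is to split each of the four items into an upper bound (membership in the stated class) and a matching lower bound ($m$-hardness), treating TxtFin as the template and then climbing the hierarchy one layer at a time.

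For the \textbf{upper bounds}, fix an index $i$ for a uniformly c.e.\ family $\{W_j\}_j$. I would unfold ``learnable'' according to each model. For TxtFin this reads: $\exists e$ (a total computable learner) such that for every $j$ and every text $T$ enumerating $W_j$, $M_e$ on initial segments of $T$ outputs $?$ except for a single definitive correct guess. The universal quantifier over texts can be reduced to a universal quantifier over finite strings whose content lies in $W_j$ (the usual locking-sequence reformulation), and counting alternations then places the condition in $\Sigma^0_3$. Replacing ``single guess'' by ``eventually stable on a correct index'' for TxtEx adds one alternation and yields $\Sigma^0_4$. Demanding instead that ``every sufficiently late guess is a correct index'' (TxtBC) or ``a correct index modulo finite error'' (TxtEx$^\ast$) adds one further layer of ``for almost every $n$,'' landing in $\Sigma^0_5$.

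For the \textbf{lower bounds}, for each model I would exhibit a computable $f$ such that a $\Sigma^0_n$-complete set $S$ reduces to the target index set by $f(e) \mapsto \{L^e_j\}_j$, a uniformly c.e.\ family which is learnable in the given sense iff $e\in S$. For TxtFin I would take $S$ to be a $\Sigma^0_3$-complete set such as $\mathrm{Cof}$ and build $L^e_j$ out of gadgets that ``look finite until stage $s$,'' arranging that a definitive single-guess learner exists precisely when $W_e$ is cofinite. For TxtEx, TxtBC, and TxtEx$^\ast$ I would iterate this pattern, matching each additional quantifier allowed by the mode of convergence with an additional ``almost everywhere'' in the coded family; the TxtEx$^\ast$ case reuses the TxtBC blueprint since the modes coincide in complexity.

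The \textbf{main obstacle} is the $\Sigma^0_5$ lower bound for TxtBC. One must design a family whose BC-learnability genuinely exercises all five alternations: a failure of the $\Sigma^0_5$ witness at any single level must be exploitable by an adversarial text that forces a putative learner either to diverge behaviorally or to commit to wrong indices infinitely often. The bookkeeping---interleaving coding gadgets, diagonal traps, and finite-error slack while preserving a $\Sigma^0_1$ index for the whole family---is where the technical weight sits. A secondary difficulty is ensuring the TxtFin and TxtEx reductions do not accidentally produce families learnable in the stronger sense, so the diagonal pressure must be calibrated to the exact convergence criterion in play.
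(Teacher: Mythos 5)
This theorem is imported from Beros's paper and is stated in the present paper without proof, so there is no in-paper argument to compare against; your proposal has to stand on its own, and as it stands it is a roadmap rather than a proof. The architecture you describe (upper bounds by unfolding the learning criterion and counting quantifiers, lower bounds by explicit reductions from complete sets) is indeed the right one, and you are right that the $\Sigma^0_5$ case carries the most weight. But the two places where the real content lives are exactly the places you pass over with a phrase.

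First, the upper bounds. You claim the universal quantifier over texts ``can be reduced to a universal quantifier over finite strings whose content lies in $W_j$ (the usual locking-sequence reformulation).'' The Blum--Blum locking-sequence lemma only runs in one direction: a successful learner has a locking sequence for each language it learns, but the existence of locking sequences does not by itself certify success on every text, so the quantifier over arbitrary texts (a quantifier over elements of Baire space, not an arithmetic quantifier) does not simply disappear. Even for TxtFin, expressing ``on every text the learner eventually emits its one conjecture'' arithmetically requires an argument; for TxtBC the taming of this quantifier is the central technical theorem of Beros's paper --- that if a uniformly c.e.\ family is not BC-learnable by $M$, then some $\Delta^0_2$ text witnesses the failure --- which is what lets the text quantifier be replaced by a quantifier over limit-computable objects and lands the condition in $\Sigma^0_5$ rather than $\Sigma^1_1$. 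Your sketch does not identify that this lemma is needed, let alone prove it. Second, the lower bounds: ``gadgets that look finite until stage $s$'' calibrated against $\mathrm{Cof}$, and ``iterate this pattern'' for the higher levels, is a statement of intent, not a construction; the verification that each coded family is learnable in exactly the intended mode (and that the reduction is uniform and produces a genuine uniformly c.e.\ family) is where essentially all of the work in those proofs sits. You have correctly located the difficulties, but you have not crossed them.
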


\subsection{PAC Learning}

The model of learning that concerns us here (\emph{PAC learning}, for
``Probably Approximately Correct'') was first proposed by
Valiant in \cite{Valiant}.  Much of our exposition of the subject comes from
\cite{KearnsVazirani}.  The idea of the model is that it should allow
some acceptably small error of each of two kinds: one arising from
targets to be learned which are somehow too close together to be
easily distinguished, and the other arising from randomness in the
examples shown to the learner.  Neither aspect is easily treated
in Gold's framework of identifying indices for computable enumerations
of natural numbers by inspecting initial segments --- neither a notion
of ``close'' nor randomness in the inputs.

In the present paper, we will describe a framework in which to model
PAC learning in a way which is suitable for recursion-theoretic
analysis and which is broad enough to include many of the benchmark
examples.  We will then calculate the $m$-degree of the set of indices
for learnable concept classes.

\begin{dfn}[Valiant] \rule{0in}{0in}
\begin{enumerate}
\item Let $X$ be a set, called the \emph{instance
    space}.  
\item Let $\sC$ be a subset of $\sP(X)$, called a \emph{concept
    class}.  
\item The elements of $\sC$ are called \emph{concepts}.  
\item We say
  that $\sC$ is \emph{PAC Learnable} if and only if there is an
  algorithm $\varphi_e$ such that for every $c \in \sC$, every $\epsilon,\delta \in (0,\frac{1}{2})$ and every probability
  distribution $\sD$ on $X$, the algorithm $\varphi_e$ behaves as follows:
On input $(\epsilon, \delta)$, the algorithm $\varphi_e$ will ask for some
  number $n$ of examples, and will be given $\{(x_1,i_1), \dots, (x_n,i_n)\}$
  where $x_j$ are independently randomly drawn from $\sD$, and $i_j =
  \chi_c(x_j)$.  The algorithm will then output some $h \in \sC$ so
  that with probability at least $1-\delta$ in $\sD$, the symmetric difference
  of $h$ and $c$ has probability at most $\epsilon$ in $\sD$.
\end{enumerate}
\end{dfn}

This is a well-studied model --- so well-studied, in fact, that it is
more usual to talk about the complexity of the algorithm (in both
running time and the number of example calls) than about its
existence.  For the present paper, though, we restrict ourselves to
the latter problem.  Several examples are well-known.

\begin{ex} Let $X = 2^n$, interpreted as assignments of truth values
  to Boolean variables.  Then the class $\sC$ of $k$-CNF expressions is
  PAC learnable (where each expression $c \in \sC$ is interpreted as the
  set of truth assignments that satisfy it).\end{ex}

\begin{ex} Let $X = \mathbb{R}^d$.  Then the class $\sC$ of linear
  half-spaces is PAC learnable.\end{ex}

\begin{ex} Let $X = \mathbb{R}^2$.  Then the class of convex $d$-gons
  is PAC learnable for any $d$.\end{ex}

\subsection{The Vapnik-Chervonenkis Dimension}

An alternate view of PAC learnability arises from work of Vapnik and
Chervonenkis \cite{VC}.  Again, we follow the exposition of
\cite{KearnsVazirani}.

\begin{dfn} Let $\sC$ be a concept class.
\begin{enumerate}
\item Let $S \subseteq X$.  Then $\Pi_\sC(S) = \left|\left\{S \cap c :
  c \in \sC\right\}\right|$.
\item The VC dimension of $\sC$ is the greatest integer $d$ such that
  $\Pi_\sC(S) = 2^d$ for some $S$ with cardinality $d$, if such an
  integer exists.  Otherwise, the VC dimension of $\sC$ is $\infty$.
\end{enumerate}
\end{dfn}

For example, if $\sC$ is the class of linear half-spaces of
$\mathbb{R}^2$, and if $S$ is a set of size 4, suppose that $k$ is the
least such that all of $S$ is contained in the convex hull of $k\leq
4$ points.  If $k<4$, take a set $S_0$ of size $k$ such that the
convex hull of $S_0$ contains $S$.  The subset $S_0 \subset S$ cannot
be defined by intersecting $S$ with a linear half-space.  If $k=4$,
then let $S_0$ be a diagonal pair, which again cannot be defined by
intersection with a linear half-space.  Consequently, the VC dimension
of $\sC$ must be at most 3.  One can also show that this bound is
sharp.

The connection of VC dimension with learnability is a theorem of
Blumer, Ehrenfeucht, Haussler, and Warmuth showing that under some
reasonable measure-theoretic hypotheses (which hold in all examples
shown so far, and in all examples that will arise in the present
paper), finite VC dimension is equivalent to PAC learnability
\cite{BEHW}.

\begin{dfn}[Ben-David, as described in \cite{BEHW}] Let $R \subseteq \sP(X)$, and let $\sD$ be a probability distribution on $X$, and $\epsilon >0$.  
\begin{enumerate}
\item We say that $N \subseteq X$ is an $\epsilon$-transversal for $R$
  with respect to $\sD$ if and only if for any $c \in R$ with
  $P_{\sD}(c) >\epsilon$ we have $N \cap R \neq \emptyset$.
\item For each $m \geq 1$, we denote by $Q^m_\epsilon(R)$ the set of
  $\vec{x} \in X^m$ such that the set of distinct elements of
  $\vec{x}$ does not form an $\epsilon$-transversal for $R$ with
  respect to $\sD$.
\item For each $m\geq 1$, we denote by $J^{2m}_{\epsilon}(R)$ the set of
  all $\vec{x}\vec{y} \in X^{2m}$ with $\vec{x}$ and $\vec{y}$ each of
  length $m$ such that there is $c \in R$ with $P_\sD(c)>\epsilon$
  such that no element of $c$ occurs in $\vec{x}$, but elements of $c$
  have density at least $\frac{\epsilon m}{2}$ in $\vec{y}$.
\item We say that a concept class $\sC$ is \emph{well-behaved} if for
  every Borel set $b$, the sets $Q^m_\epsilon(R)$ and $J^{2m}_\epsilon(R)$
  are measurable where $R = \{c \triangle b : c \in \sC\}$.
\end{enumerate}
\end{dfn}

This notion of ``well-behaved'' is exactly the necessary hypothesis
for the equivalence:

\begin{thm}[\cite{BEHW}]\label{wellbehaved} Let $\sC$ be a nontrivial, well-behaved
  concept class.  Then $\sC$ is PAC learnable if and only if
  $\sC$ has finite VC dimension.\end{thm}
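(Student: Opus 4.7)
The plan is to follow the two-direction argument of Blumer, Ehrenfeucht, Haussler, and Warmuth: the forward half (PAC implies finite VC dimension) is a short adversary argument, while the reverse half (finite VC implies PAC) is the substantive part and is precisely where the well-behavedness hypothesis is consumed.

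For $\sC$ PAC learnable $\Rightarrow$ finite VC dimension, I would argue by contrapositive. Suppose $\sC$ shatters arbitrarily large sets, and let $\varphi_e$ be a proposed learner. Fix $\epsilon = \delta = \frac{1}{4}$, let $m$ be the number of examples $\varphi_e$ requests on that input, pick a shattered set $S \subseteq X$ of cardinality $2m+1$, and put the uniform distribution $\sD$ on $S$. A sample of size $m$ leaves at least $m+1$ points of $S$ unseen, and the shattering property lets an adversary choose a target $c \in \sC$ whose labels on the unseen points are independent of the revealed bits. Then $\varphi_e$'s output must disagree with $c$ on at least half of the unseen portion with probability bounded below by a constant, contradicting the PAC guarantee at $\epsilon, \delta = \frac{1}{4}$.

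For finite VC dimension $d \Rightarrow$ $\sC$ PAC learnable, the learner $\varphi_e$ asks for a sample $\vec{x}$ of size $m = m(\epsilon, \delta, d)$ and outputs any $h \in \sC$ consistent with the observed labels; such an $h$ exists because the target is in $\sC$. Fix a target $t \in \sC$ and distribution $\sD$, and set $R = \{t \triangle h : h \in \sC,\ P_\sD(t \triangle h) > \epsilon\}$. The learner fails exactly when some $h$ with $t \triangle h \in R$ is still consistent with the sample, i.e., when $\vec{x} \in Q^m_\epsilon(R)$; well-behavedness makes this event measurable. The classical symmetrization lemma then bounds $P_\sD^m(Q^m_\epsilon(R))$ by $2\, P_\sD^{2m}(J^{2m}_\epsilon(R))$ (measurable by the other clause of well-behavedness), and on the double sample the Sauer-Shelah lemma caps the number of dichotomies $\sC$ can realize at $O((2m)^d)$. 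A random-permutation argument over the $2m$ fixed points bounds the contribution of each fixed dichotomy to the failure event by $2^{-\epsilon m / 2}$, and a union bound yields $O((2m)^d) \cdot 2^{-\epsilon m / 2}$, which falls below $\delta$ for $m$ polynomial in $d$, $1/\epsilon$, and $\log(1/\delta)$.

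The main obstacle, and the reason the well-behavedness hypothesis sits in the statement, is measurability at the symmetrization step: without it, neither $Q^m_\epsilon(R)$ nor $J^{2m}_\epsilon(R)$ need be measurable, so ``probability of failure'' is not even well-defined and the symmetrization inequality is vacuous. The combinatorial heart of the argument (Sauer-Shelah together with the permutation count on a doubled sample) is purely finitary and uses no measure theory; what well-behavedness buys is precisely the ability to translate that finitary estimate back into a genuine probability bound on the original sample.
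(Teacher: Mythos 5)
This theorem is imported from \cite{BEHW} and the paper gives no proof of its own, so the only fair comparison is with the cited source: your sketch is a faithful outline of exactly that argument --- the adversary/averaging lower bound for the forward direction, and the consistent-learner analysis via symmetrization, Sauer--Shelah, and the permutation count for the converse, with well-behavedness invoked precisely where the measurability of $Q^m_\epsilon(R)$ and $J^{2m}_\epsilon(R)$ is needed. The one point worth flagging is that the paper's definition of PAC learnability demands an \emph{algorithm}, whereas the BEHW argument (and your step ``output any $h\in\sC$ consistent with the sample'') only establishes the existence of a learning \emph{function}; the paper glosses over this gap as well, but in the effective setting of $\Pi^0_1$ concept classes the computable search for a consistent hypothesis would need its own justification.
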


\section{Concepts and Concept Classes}

The most general context in which PAC learning makes sense is far too
broad to say anything meaningful about the full problem of determining
whether a class is learnable.  If we were to allow the instance space
to be an arbitrary set, and a concept class an arbitrary subset of the
powerset of the instance space, we would quickly be thinking about a
non-trivial fragment of set theory.

In practice, on the other hand, one usually fixes the
instance space, and asks whether (or how efficiently, or just by what
means) a particular class is learnable.  This approach is too narrow
for the main problem of this paper to be meaningful.  The goal of this
section, then, is to describe a context broad enough to cover many of
the usual examples, but constrained enough to be tractable.

Many of the usual examples of machine learning problems can be
systematized in the framework of $\Pi^0_1$ classes, which will now be
introduced.  The following result is well-known, but a proof is given
in \cite{Pi01}, which is also a good general reference on $\Pi^0_1$
classes.

\begin{thm}\label{pi01eq} Let $c \subseteq 2^\omega$.  Then the following are
  equivalent:
\begin{enumerate}
\item $c$ is the set of all infinite paths through a computable subtree of
  $2^\omega$
\item $c$ is the set of all infinite paths through a $\Pi^0_1$ subtree of
  $2^\omega$ (i.e.\ a co-c.e.\ subtree)
\item $c = \{x \in 2^\omega : \forall n \ R(n,x)\}$ for some
  computable relation $R$, i.e.\ a relation $R$ for which there is a
  Turing functional $\Phi$ such that $R(n,x)$ is defined by
  $\Phi^x(n)$.
\end{enumerate}
\end{thm}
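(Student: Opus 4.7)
The plan is to establish (1) $\Leftrightarrow$ (2) and (1) $\Leftrightarrow$ (3). Since a decidable tree is in particular co-c.e., (1) $\Rightarrow$ (2) is immediate. For the translation (1) $\Leftrightarrow$ (3): given a computable tree $T$, the Turing functional that reads the first $n$ bits of its oracle and decides $x \upharpoonright n \in T$ is total and witnesses (3). Conversely, given a total computable $\Phi$ witnessing (3), I would define
\[
T = \{\sigma \in 2^{<\omega} : \text{for every } n < |\sigma|, \text{ if } \Phi^\sigma(n) \text{ halts in } |\sigma| \text{ steps with use } \leq |\sigma|, \text{ then its output is } 1\}.
\]
Using $|\sigma|$ simultaneously as stage bound, use bound, and argument bound makes $T$ decidable, and also arranges that any shorter prefix $\tau \preceq \sigma \in T$ inherits its own weaker instance of the condition, so $T$ is closed under prefixes. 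The equality $[T] = c$ follows from the usual monotonicity of converging oracle computations: if $x \in c$, every computation along a prefix of $x$ that converges with finite use agrees with $\Phi^x(n) = 1$; if $x \in [T]$, totality of $\Phi^x(n)$ forces convergence at some finite stage with finite use, after which any sufficiently long prefix of $x$ triggers the defining condition and pins the output to $1$.

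For (2) $\Rightarrow$ (1), the plan is a standard approximation argument. Given a $\Pi^0_1$ tree $T$, fix a computable enumeration $S_0 \subseteq S_1 \subseteq \cdots$ of $2^{<\omega} \setminus T$ and define
\[
T' = \{\sigma \in 2^{<\omega} : \text{no prefix of } \sigma \text{ has appeared in } S_{|\sigma|}\}.
\]
The monotonicity $S_{|\tau|} \subseteq S_{|\sigma|}$ for $\tau \preceq \sigma$ gives both computability and closure under prefixes. The equality $[T'] = [T]$ splits into two easy directions: an infinite path through $T$ never meets any $S_s$, so it satisfies the defining condition of $T'$ at every length; conversely, if some prefix of $x$ is outside $T$, it enters $S_s$ at some stage $s$, after which all sufficiently long prefixes of $x$ are expelled from $T'$.

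The main difficulty, such as it is, lies in the bookkeeping in the displayed definition of $T$ in the (3) $\Rightarrow$ (1) step: the three appearances of $|\sigma|$ must be coordinated so that both (a) $T$ is closed under prefixes and (b) for every $x \in [T]$ and every $n$, some prefix of $x$ is long enough to force the defining condition to demand output $1$. Once this is arranged, the remaining verifications, and the approximation trick in (2) $\Rightarrow$ (1), are purely routine.
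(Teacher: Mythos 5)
Your proof is correct, and all three constructions (the stage-and-use-bounded truncation for (3)$\Rightarrow$(1), the prefix-closure check, and the approximation tree $T'$ for (2)$\Rightarrow$(1)) are carried out with the right bookkeeping: using $|\sigma|$ as simultaneous bound on argument, stage, and use is exactly what makes $T$ computable and downward closed, and the monotone enumeration $S_0\subseteq S_1\subseteq\cdots$ does the same for $T'$. The paper itself gives no proof of this theorem --- it records it as well known and cites the standard reference on $\Pi^0_1$ classes --- and your argument is the standard one found there, so there is nothing to reconcile between the two.
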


This equivalence (and other similar formulations could be added) gives
rise to the following definition:

\begin{dfn} Let $c \subseteq 2^\omega$.  Then we say that $c$ is a
  $\Pi^0_1$ class if and only if it satisfies one of the equivalent
  conditions in Theorem \ref{pi01eq}.\end{dfn}

\begin{ex} There is a natural and uniform representation of all
  well-formed formulas of classical propositional calculus, each as a
  $\Pi^0_1$ class.  We regard $2^\omega$ as the assignment of values
  to Boolean variables, so that for $f \in 2^\omega$, the value $f(n)
  = k$ indicates a value of $k$ for variable $x_n$.  Let $\varphi$ be
  a propositional formula.  We construct a $\Pi^0_1$ subtree
  $T_\varphi \subseteq 2^\omega$ such that $f \in T_\varphi$ if and
  only if $f$ satisfies $\varphi$.  At stage $n$, for each $\sigma \in
  2^{<\omega}$ of length $n$, we include $\sigma \in T_\varphi$ if and
  only if there is an extension $f \supset \sigma$ such that $F
  \models \varphi$.  This condition can be checked effectively.
  Consequently, $T_\varphi$ is a $\Pi^0_1$ subtree of $2^\omega$ ---
  intuitively, an infinite path $f$ may fall out of $T_\varphi$ at
  some point when we see a long enough initial segment to detect
  non-satisfiability, but unless it falls out at some finite stage, it
  is included.\end{ex}

\begin{ex} There is a natural and uniform representation of all
  \emph{closed intervals of $\mathbb{R}$} with computable endpoints, each as
  a $\Pi^0_1$ class.  We take the usual representation of real numbers
  by binary strings.  Let $I$ be a closed interval with computable
  endpoints.  We construct a $\Pi^0_1$ tree $T_I \subseteq 2^\omega$
  such that the set of paths through $T_I$ is equal to $I$.  At stage
  $s$, we include in $T_I$ all binary sequences $\sigma$ of length $s$
  such that there is an extension $f \supset \sigma$ with $f \in I$.
  This condition can be checked effectively, by the computability of
  the endpoints of $I$.  Consequently, $T_I$ is a $\Pi^0_1$ subtree of
  $2^\omega$.
\end{ex}

\begin{ex} There is a natural and uniform representation of all
  \emph{closed linear
  half-spaces of $\mathbb{R}^d$} which are defined by hyperplanes with
  computable coefficients, each half-space as a $\Pi^0_1$ class.  We encode
  $\mathbb{R}^d$ as $2^\omega$ in the following way: the $i$th
  coordinate of the point represented by the path $f$ is given by the
  sequence $\left(f(k) : k \equiv i \mod d\right)$.  Now we encode a
  linear subspace into a subtree in the same way as with intervals in
  the previous example.
\end{ex}

\begin{ex} There is a natural and uniform representation of all
  \emph{convex $d$-gons in $\mathbb{R}^2$} with computable vertices,
  with each $d$-gon represented by a $\Pi^0_1$ class.  A convex
  $d$-gon is an intersection of $d$ closed linear half-spaces, and so
  we exclude a node $\sigma \in 2^\omega$ from the tree for our
  $d$-gon if and only if it is excluded from the tree for at least one
  of those linear half-spaces.\end{ex}

Note that the requirement of computable boundaries of these examples
is not a practical restriction.  

\begin{prop}\label{approx} For any probability measure $\mu$ on $\mathbb{R}^d$
  absolutely continuous with respect to Lebesgue measure, and for any
  hyperplane given by $f(\vec{x}) = 0$, there is a hyperplane given by
  $\bar{f}(\vec{x}) = 0$ where $\bar{f}$ has computable coefficients,
  and where the linear half-spaces defined by these hyperplanes are
  close in the following sense: If $H_f$ is defined by $f(\vec{x})
  \leq 0$, if $H^0_f$ is defined by $f(\vec{x}) < 0$, and $H_{\bar{f}}$
  is defined by $\bar{f}(\vec{x})\leq 0$, then $\mu\left(H_f \triangle
  H_{\bar{f}}\right) < \epsilon$ and $\mu\left(H^0_f \triangle
  H_{\bar{f}}\right) <\epsilon$.\end{prop}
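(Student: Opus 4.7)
The plan is to combine two standard facts: a hyperplane in $\mathbb{R}^d$ is Lebesgue-null, hence $\mu$-null by absolute continuity; and the indicator of a closed half-space depends continuously (in the a.e.\ sense) on the defining coefficients. Then a dominated convergence argument does the work.

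Write $f(\vec{x}) = a_0 + \sum_{i=1}^d a_i x_i$. If $(a_1,\ldots,a_d) = 0$ then $H_f$ is either $\mathbb{R}^d$ or $\emptyset$ and the statement is trivial, so assume $(a_1,\ldots,a_d) \neq 0$. Since the rationals are dense in $\mathbb{R}$, I can choose a sequence $\bar{f}_n(\vec{x}) = \bar{a}_0^{(n)} + \sum_{i=1}^d \bar{a}_i^{(n)} x_i$ with rational (hence computable) coefficients satisfying $\bar{a}_i^{(n)} \to a_i$ as $n \to \infty$ for each $i$. Rationality is stronger than needed; any computable approximation suffices.

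Next I would observe that the hyperplane $Z := \{\vec{x} : f(\vec{x}) = 0\}$ has $d$-dimensional Lebesgue measure zero, so by absolute continuity of $\mu$ with respect to Lebesgue measure, $\mu(Z) = 0$. Equivalently, $\mu(H_f \triangle H_f^0) = 0$, so it suffices to bound $\mu(H_f \triangle H_{\bar{f}_n})$; the bound on $\mu(H_f^0 \triangle H_{\bar{f}_n})$ follows from the triangle inequality for symmetric difference. For each fixed $\vec{x} \notin Z$, we have $\bar{f}_n(\vec{x}) \to f(\vec{x}) \neq 0$, so for all sufficiently large $n$ the sign of $\bar{f}_n(\vec{x})$ agrees with that of $f(\vec{x})$, giving $\chi_{H_{\bar{f}_n}}(\vec{x}) \to \chi_{H_f}(\vec{x})$. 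Since $Z$ is $\mu$-null, this is $\mu$-a.e.\ convergence.

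Finally, I would apply the dominated convergence theorem (with dominating function $\mathbf{1}$, since $\mu$ is a probability measure) to conclude
\[
\mu(H_f \triangle H_{\bar{f}_n}) \;=\; \int \bigl|\chi_{H_{\bar{f}_n}} - \chi_{H_f}\bigr|\, d\mu \;\longrightarrow\; 0,
\]
and pick $n$ large enough that this quantity is less than $\epsilon$; then set $\bar{f} = \bar{f}_n$. There is no real obstacle: the only subtlety is the reduction to a statement about the hyperplane itself via absolute continuity, which kills the boundary issue that distinguishes $H_f$ from $H_f^0$ and justifies replacing sign-convergence on $\mathbb{R}^d \setminus Z$ with a.e.\ convergence.
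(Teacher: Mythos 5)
Your argument is correct, but it takes a genuinely different route from the paper. The paper works geometrically: it uses the cumulative distribution function to trap all but $\epsilon/2$ of the mass of $\mu$ in a bounded orthotope $B$ with computable vertices, then perturbs the hyperplane by choosing computable points on the faces of $B$ close to $\{f(\vec{x})=0\}$ and passing to the hyperplane they span, so that the symmetric difference inside $B$ has measure less than $\epsilon/2$. You instead argue measure-theoretically: rational approximation of the coefficients, the observation that the hyperplane $Z=\{f(\vec{x})=0\}$ is Lebesgue-null and hence $\mu$-null (which disposes of the $H_f$ versus $H^0_f$ distinction at once), pointwise convergence of signs off $Z$, and dominated convergence. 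Your version is cleaner and avoids the step in the paper that is stated only qualitatively (choosing points ``so close that'' the symmetric difference is small); the paper's version is more constructive in spirit, since the orthotope and the perturbed points could in principle be extracted with explicit error control, whereas your selection of $n$ is purely existential. Both are adequate for the proposition as stated, which only asserts existence. One small remark: in your degenerate case $(a_1,\dots,a_d)=0$ with $a_0=0$, the set $\{f(\vec{x})=0\}$ is all of $\mathbb{R}^d$ and the two required bounds cannot hold simultaneously for small $\epsilon$; but this case is excluded by the hypothesis that $f(\vec{x})=0$ defines a hyperplane, so you should say it is vacuous rather than trivial.
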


\begin{proof}
Since a hyperplane has Lebesgue measure 0, it suffices to show that we
can achieve $\mu\left(H_f \triangle H_{\bar{f}}\right) < \epsilon$.
Now by using the cumulative distribution function, we can construct a
bounded $d$-orthotope $B \subseteq \mathbb{R}^d$ with computable
vertices such that $\mu(B) \geq \frac{\epsilon}{2}$.

Since computable points are dense in $\mathbb{R}$, we can find, in
each face $F_i$ of $B$, a computable point $\vec{a}_i$ so close to
$f(\vec{x}) = 0$ that if $\bar{f}(\vec{x}) = 0$ is the hyperplane determined
by the set of points $\{\vec{a}_i : i \leq d\}$,
then \[\mu\left(\left(H_f \triangle H_{\bar{f}}\right) \cap B\right) <
\frac{\epsilon}{2}.\]  The coefficients of $\bar{f}$ are computable
since the points $\vec{a}_i$ are computable.  Furthermore, \[\mu\left(H_f \triangle H_{\bar{f}}\right) < \epsilon.\]\end{proof}

Examples could be multiplied, of course, and it seems likely that many
of the more frequently encountered machine learning situations could
be included in this framework --- certainly, for instance, any example
in \cite{bishop}, \cite{KearnsVazirani}, or \cite{russellnorvig}.

We will work, for the purposes of the present paper, with instance
space $2^\omega$ and with concepts which are $\Pi^0_1$ classes.  It
remains to describe the concept classes to be used.

There is an unfortunate clash of terminology in that the concept
\emph{classes} will have, for their members, $\Pi^0_1$
\emph{classes}.  In this paper, we will never use the term
ambiguously, but because both terms are so well-established it will be
necessary to use both of them.

\begin{dfn} A weakly effective concept class is a computable enumeration
  $\varphi_e : \mathbb{N} \to \mathbb{N}$ such that $\varphi_e(n)$ is
  a $\Pi^0_1$ index for a $\Pi^0_1$ tree $T_{e,n}$.
\end{dfn}

Naturally, we interpret each index enumerated as the $\Pi^0_1$ class
of paths through the associated tree.  We also freely refer to the
indices (or trees, or $\Pi^0_1$ classes) in the range of a concept
class as its elements.

This definition is almost adequate to our needs.  We would like,
however, one additional property: that a finite part of an effective
concept class should not be able to distinguish a non-computable point
of $2^\omega$ from all computable points.  This is reasonable: it
would strain our notion of an ``effective'' concept class if it should
fail.  And yet it can fail with a weakly effective concept class: our
classes may have no computable members at all, for instance.  For that
reason, we define an effective concept class as follows.

\begin{dfn} An effective concept class is a weakly effective concept
  class $\varphi_e$ such that for each $n$, the set $c_n$ of paths through
  $T_{e,n}$ is computable in the sense that there is a computable
  function $f_{c_n}(d,r):2^{<\omega} \times \rat \to \{0,1\}$
  such that \[f_{c_n}(\sigma, r) = \left\{\begin{array}{ll} 1 &
  \mbox{ if $B_r(\sigma) \cap c_n \neq \emptyset$}\\
0 & \mbox{ if $B_{2r}(\sigma) \cap c_n = \emptyset$}\\
\mbox{$0$ or $1$} & \mbox{ otherwise}\\
\end{array}\right.\]
where $B_r(\sigma)$ is the set of all paths that either extend
$\sigma$ or first differ from it at the $-\lceil lg(r)\rceil$ place or
later (see
\cite{bravermanyampolskybk,weihrauch}).
\end{dfn}

In addition to the useful property mentioned above, which we will soon
prove, there is another reason for preferring this stronger
definition: Typically when we want a computer to learn something, it
is with the goal that the computer will then be able to act on it.
Computability of each concept is a necessary condition for this.  The restriction
corresponds, in the examples, to the restriction that a linear
half-space, for instance, be defined by computable coefficients.  The
classes we consider in this paper will be effective concept classes.

\begin{prop}\label{comprepl} Let $\sC$ be an effective concept class, and let $c_1,
  \dots, c_k \in \sC$.  Then for any $y \in 2^\omega$, there is a
  computable $x \in 2^\omega$ such that for each $i \in \{1, \dots,
  k\}$, we have $x \in c_i$ if and only if $y \in c_i$.\end{prop}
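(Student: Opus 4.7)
My plan is to first identify the membership pattern of $y$ with respect to the concepts, then extract a computable point realizing the same pattern by exploiting the locatedness of each $c_i$.

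First, I let $S = \{i \le k : y \in c_i\}$. For each $i \notin S$, since $c_i$ is closed and $y \notin c_i$, there is an initial segment $\sigma_i \prec y$ satisfying $[\sigma_i] \cap c_i = \emptyset$; I take $\sigma$ to be the longest such $\sigma_i$ (or $\sigma = \emptyset$ if $S = \{1,\dots,k\}$). Every $x \succ \sigma$ then automatically avoids $c_i$ for $i \notin S$, while $y \in [\sigma] \cap c_i$ for each $i \in S$. So the task reduces to producing a computable $x$ extending $\sigma$ and lying in $\bigcap_{i \in S} c_i$.

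For this, I translate the locatedness condition into effective tree data. The key observation is that for any rational $r \le 2^{-|\tau|-1}$ the ball $B_r(\tau)$ coincides with the cylinder $[\tau]$, so the slack in the definition of $f_{c_i}$ disappears and $f_{c_i}(\tau,r)$ decides exactly whether $[\tau] \cap c_i = \emptyset$. Hence the pruned tree $T_i^{*} := \{\tau : [\tau] \cap c_i \neq \emptyset\}$ is computable for each $i \in S$, and so is $T^{*} := \bigcap_{i \in S} T_i^{*}$. Since $y \upharpoonright n \in T^{*}$ for every $n \ge |\sigma|$, the computable tree $T^{*}$ has an infinite branch extending $\sigma$, and one hopes to extract a computable one by a step-by-step greedy search: at stage $n$ with $\tau_n \in T^{*}$ extending $\sigma$, compute whether $\tau_n 0 \in T^{*}$ and extend accordingly, reading off $x$ in the limit.

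The main obstacle is the following subtlety. A node of $T^{*}$ witnesses extendibility into each $c_i$ separately, not into their intersection, so a naive greedy extension in $T^{*}$ could in principle produce a branch through $\bigcap_i T_i^{*}$ that nevertheless leaves $\bigcap_i c_i$, or force a dead-end in which neither $\tau_n 0$ nor $\tau_n 1$ admits extensions realizing every $c_i$ simultaneously. The central technical work is to show that, using the given witness $y$ together with the finite cardinality of $S$, the construction can always extend and that the resulting path genuinely stays in every $c_i$; this will require maintaining an explicit invariant coordinating the approximations supplied by the various $T_i^{*}$ so that the limit path is accounted for in each located concept at once.
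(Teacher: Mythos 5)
Your handling of the coordinates with $y \notin c_i$ is fine and essentially what the paper does: closedness gives a single initial segment $\sigma \prec y$ forcing every extension out of each such $c_i$. The gap is exactly the step you defer at the end. After the reduction you must produce a computable point of $\bigcap_{i \in S} c_i$ extending $\sigma$, and the only data your construction can use are the decidable pruned trees $T_i^{*}$ together with the fact that the intersection is nonempty ($y$ itself, being an arbitrary and possibly noncomputable point, cannot be consulted). But that statement is false in general: the intersection of two located closed subsets of $2^\omega$ can be a nonempty $\Pi^0_1$ class with no computable members. Sketch: let $T$ be a computable tree with paths but no computable paths, let $\mathrm{code}(\tau)$ double each bit of $\tau$ and append a recognizable end marker, and let $C_1$ and $C_2$ be the closures of $\{\mathrm{code}(\tau)\,0^\omega : \tau \in T\}$ and $\{\mathrm{code}(\tau)\,1^\omega : \tau \in T\}$ respectively. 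Each pruned tree is decidable, because deciding whether a finite string extends into $C_j$ only requires testing \emph{membership} of specific strings in $T$, never extendibility; yet $C_1 \cap C_2$ is precisely the set of doubled paths of $T$, which contains no computable point. So the ``explicit invariant coordinating the approximations'' you hope to maintain does not exist: the obstacle you correctly identified is not a technicality to be engineered around but a genuine obstruction, and no bookkeeping on the trees $T_i^{*}$ alone will close it.

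The paper avoids this by using information about where $y$ sits relative to each $c_i$ rather than trying to navigate the intersection effectively: if $y$ avoids every boundary $\partial c_i$, then $y$ lies in the finite intersection of the open sets $c_i^\circ$ (for $i \in S$) and the open complements (for $i \notin S$), which is a nonempty open set, hence contains a basic clopen set and so a computable point such as $\sigma$ followed by $0^\omega$; the remaining case, $y$ on some boundary, is handled separately. If you want to rescue your approach, you must likewise split on whether $y$ is interior to each $c_i$ it belongs to; the hypotheses you actually invoke (locatedness of each factor plus nonemptiness of the intersection) are provably insufficient.
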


\begin{proof} Let $y, c_1, \dots, c_k$ be as described in the
  statement of the Proposition.  Let $I$ be the set of $i$ such that
  $y \in c_i$ and $J$ be the set of $i$ such that $y \notin c_i$.

Suppose first that $y$ is not in the boundary $\partial c_i$ of $c_i$
for each $i$.  Then \[y \in N := \left(\bigcap\limits_{i \in I}
c_i^\circ\right) \cap \left(\bigcap\limits_{i \in J}
(\overline{c_i})^\circ\right),\] where $S^\circ$ denotes the interior of $s$ and
$\bar{S}$ the complement of $S$.  Since $I$ and $J$ are finite, $N$ is
open.  Since $y \in N$, the set $N$ is nonempty, and must contain a
basic open set of $2^\omega$, and so must contain a computable member,
$x$, as required.

Now suppose that $y$ is in $\partial c_i$ for some $i$.  Then we can
compute $y$, using the function $f_{c_i}$, so that $y$ is itself
computable and we take $x = y$.\end{proof}

We note that all of the examples given so far are effective concept
classes.

\begin{ex} The class of well-formed formulas of classical
  propositional calculus, and the class of $k$-CNF expressions (for
  any $k$) are effective concept classes, by the example
  above.  Whether a given $y \in 2^\omega$ satisfies a particular
  formula can be determined by examining only finitely many terms of $y$.
\end{ex}

\begin{ex} The class $\sC$ of linear half-spaces in $\mathbb{R}^d$ bounded by
  hyperplanes with computable coefficients is an effective concept
  class.  Recall that each linear half-space with
  computable coefficients is a computable set, since the distance of
  a point from the boundary can be computed.\end{ex}

\begin{ex} The class of convex $d$-gons in $\mathbb{R}^2$ with
  computable vertices is an effective concept class.\end{ex}

Again, it appears that any example in any of the standard
references is an effective concept class.

A pleasant feature of the effective concept classes is that they are
always well-behaved.

\begin{lem} A weakly effective concept class has finite VC dimension if and
  only if it is PAC learnable.\end{lem}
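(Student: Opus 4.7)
The plan is to deduce the lemma from Theorem \ref{wellbehaved}; the only substantive task is to check that every weakly effective concept class $\sC$ is well-behaved in the sense of Ben-David. (If $\sC$ has fewer than two concepts the statement is trivial: the VC dimension is zero and a constant hypothesis learns the one concept perfectly, so I would dispatch that case at the start and then assume $\sC$ is nontrivial.)

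First, I would fix an arbitrary Borel set $b \subseteq 2^\omega$ and form $R = \{c \triangle b : c \in \sC\}$. Because $\sC$ is given as a computable enumeration $\varphi_e$, I can list $R = \{r_n : n \in \bN\}$ with $r_n = c_n \triangle b$, where $c_n$ is the set of paths through the $\Pi^0_1$ tree $T_{e,n}$. Each $c_n$ is closed in $2^\omega$, hence Borel, so each $r_n$ is Borel and therefore $\sD$-measurable for every probability measure $\sD$ on $X$.

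Second, I would rewrite the Ben-David sets as countable unions of measurable sets. Unpacking the definitions,
\[Q^m_\epsilon(R) \;=\; \bigcup_{n \,:\, P_\sD(r_n) > \epsilon} \bigl\{\vec{x} \in X^m : x_i \notin r_n \text{ for each } i \leq m\bigr\},\]
and $J^{2m}_\epsilon(R)$ decomposes analogously as a countable union, indexed by those $n$ with $P_\sD(r_n) > \epsilon$, of the set of $\vec{x}\vec{y} \in X^{2m}$ whose $\vec{x}$-coordinates all miss $r_n$ while at least $\epsilon m/2$ of the $\vec{y}$-coordinates lie in $r_n$. Each summand is a finite Boolean combination of preimages of the measurable set $r_n$ under coordinate projections and is therefore measurable in the product measure on $X^m$ or $X^{2m}$; a countable union of measurable sets is measurable. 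Since this verification works for every Borel $b$, the class $\sC$ is well-behaved, and Theorem \ref{wellbehaved} immediately yields the equivalence. I anticipate no real obstacle here: the essential point is just that a computable enumeration is countable, which collapses the potential non-measurability issues that motivated Ben-David's condition in the first place.
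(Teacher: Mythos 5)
Your proof is correct, and it takes a genuinely different route from the paper's. The paper does not verify measurability by hand: it invokes the result (due to Ben-David, as presented in \cite{BEHW}) that every \emph{universally separable} concept class is well-behaved, observes that a (weakly) effective concept class is countable and hence trivially universally separable (take $\sC^* = \sC$), and then applies Theorem \ref{wellbehaved}. You instead verify well-behavedness directly from the definitions: $R$ is countable, each $r_n = c_n \triangle b$ is Borel because the set of paths through a tree is closed in $2^\omega$, and $Q^m_\epsilon(R)$ and $J^{2m}_\epsilon(R)$ decompose as countable unions of finite Boolean combinations of coordinate preimages of the $r_n$, hence are measurable. Both arguments hinge on exactly the same fact --- countability of the class --- but yours is self-contained where the paper's defers to a cited black box, and it exploits the countable case to avoid the pointwise-limit argument needed for general universally separable classes. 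You are also slightly more careful than the paper on two points: you explicitly dispatch the ``nontrivial'' hypothesis of Theorem \ref{wellbehaved}, and you work with weakly effective classes as the lemma actually states, whereas the paper's proof opens with ``Let $\sC$ be an effective concept class'' even though only countability is ever used.
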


\begin{proof} Let $\sC$ be an effective concept class.  In
  \cite{BEHW}, a proof of Ben-David is given that if $\sC$ is
  universally separable --- that is, if there is a countable subset
  $\sC^*$ such that every point in $\sC$ can be written as the
  pointwise limit of some sequence in $\sC^*$ --- then $\sC$ is
  well-behaved.  Since an effective concept class is always countable
  (i.e.\ it contains only countably many $\Pi^0_1$ classes),
  $\sC$ is trivially universally separable.  By Theorem
  \ref{wellbehaved}, the conclusion holds.\end{proof}

\section{Bounding the Degree of the Index Set}

We now turn toward the main problem of the paper, which we can now
express exactly.

\begin{prob} Determine the $m$-degree of the set of all natural
  numbers $e$ such that $\varphi_e$ is a PAC-learnable effective
  concept class.\end{prob}

One minor refinement in the problem remains: the difficulty of saying
that $e$ is the index for an effective concept class competes with
that of saying that this concept class is learnable.  Indeed, since
determining that $n$ is an $X$-index for an $X$-computable tree is
$m$-complete $\Pi^0_2(X)$ (see \cite{gk,soare}), it follows that determining
that $n$ is a $\Pi^0_1$ index for a $\Pi^0_1$ tree is $m$-complete
$\Pi^0_3$.

Since we will see that finite VC dimension can be defined at
$\Sigma^0_3$, a driving force in the $m$-degree described in the
problem above will be that it must compute all $\Pi^0_3$ sets.  This
tells us nothing about the complexity of learnability, but only about
the complexity of determining whether we have a concept class.  The
usual way to deal with this issue is by the following definition.

\begin{dfn}[\cite{ecl2}] Let $A \subseteq B$, and let $\Gamma$ be some class of
  sets (e.g.\ $\Pi^0_3$).
\begin{enumerate}
\item We say that \emph{$A$ is $\Gamma$ within $B$} if and only if $A
  = R \cap B$ for some $R \in \Gamma$.
\item We say that \emph{$S \leq_m A$ within $B$} if and only if there
  is a computable $f: \omega \to B$ such that for all $n$ we have $n
  \in S \Leftrightarrow f(n) \in A$.
\item We say that \emph{$A$ is $m$-complete $\Gamma$ within $B$} if
  and only if $A$ is $\Gamma$ within $B$ and for every $S \in \Gamma$
  we have $S \leq_m A$ within $B$.
\end{enumerate}
\end{dfn}

We can now present the question in its final form.

\begin{prob} Let $L$ be the set of indices for effective concept
  classes, $K$ the set of indices for effective concept classes which are PAC
  learnable.  What
  is the $m$-degree of $K$ within $L$?\end{prob}

The solution to the problem will have two parts.  In the present
section, we will show that $K$ is $\Sigma^0_3$ within $L$.  In the
following section, we show that $K$ is $m$-complete $\Sigma^0_3$
within $L$.

We first reduce the problem to one on computable paths through $2^\omega$.

\begin{prop} An effective concept class $\sC$ has infinite VC
  dimension if and only if for every $d$ there are (not necessarily
  uniformly) computable elements \[\left(x_i: i <d \right)\] such that $\Pi_\sC\left(x_i: i <d \right) = 2^d$..\end{prop}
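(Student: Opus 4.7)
The reverse direction is immediate from the definition of VC dimension: if for every $d$ one can exhibit $d$ points (computable or not) that are shattered by $\sC$, then $\sC$ cannot have finite VC dimension. So the content lies in the forward direction, and the main idea is to combine the assumption of shattering with Proposition \ref{comprepl} to replace each shattered point by a computable point with the same membership pattern relative to the finitely many concepts witnessing the shattering.

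The plan is as follows. Assume $\sC$ has infinite VC dimension, and fix $d \geq 1$. By the definition of VC dimension, there exist points $y_1, \dots, y_d \in 2^\omega$ and concepts $\{c_S : S \subseteq \{1, \dots, d\}\} \subseteq \sC$ (at most $2^d$ of them) such that for every $S \subseteq \{1, \dots, d\}$ and every $i \leq d$ we have $y_i \in c_S$ if and only if $i \in S$. Enumerate these witnesses as $c_1, \dots, c_k$ with $k \leq 2^d$.

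Now, for each $i \leq d$, apply Proposition \ref{comprepl} to the point $y_i$ and the finite family $c_1, \dots, c_k$. This yields a computable point $x_i \in 2^\omega$ with the property that, for every $j \leq k$, $x_i \in c_j$ if and only if $y_i \in c_j$. In particular, for every $S \subseteq \{1, \dots, d\}$, $x_i \in c_S$ if and only if $y_i \in c_S$, that is, if and only if $i \in S$. Hence $(x_1, \dots, x_d)$ is also shattered by the same concepts, so $\Pi_\sC(x_1, \dots, x_d) = 2^d$, with all $x_i$ computable.

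The only subtlety I anticipate is the qualifier \emph{not necessarily uniformly}: Proposition \ref{comprepl} is applied separately to each $y_i$, and its proof splits into cases depending on whether $y_i$ lies on the boundary of some $c_j$, so in general the computable replacements are not produced uniformly in $d$ or in the shattered set. This is, however, exactly what the statement allows, and it does not affect the conclusion. No genuine obstacle arises, because the shattering condition involves only finitely many concepts for each fixed $d$, which is precisely the hypothesis under which Proposition \ref{comprepl} applies.
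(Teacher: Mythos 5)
Your proof is correct and follows the paper's argument exactly: for each $d$, take a shattered set $(y_i : i<d)$ together with the finitely many witnessing concepts, and apply Proposition \ref{comprepl} to each $y_i$ to obtain a computable replacement with the same membership pattern, noting that the converse is immediate. No further comment is needed.
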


\begin{proof}
Let $\left(y_i : i <d\right)$ witness that $\sC$ has VC dimension at
least $d$, and denote by $D_1, \dots
D_{2^d}$ elements of $\sC$ which distinguish distinct subsets of
$\left(y_i : i <d\right)$.  For each $i<d$, there is a computable
element $x_i$ such that for every $j \leq 2^d$ we have $x_i \in D_j$
if and only if $y_i \in D_j$, by Proposition \ref{comprepl}.  Then
$x_1, \dots x_d$ witness that $\sC$ has VC dimension at least $d$.
The converse is obvious. \end{proof}

\begin{prop} The set of indices for effective concept classes of
  infinite VC dimension is $\Pi^0_3$ within $L$.\end{prop}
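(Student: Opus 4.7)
The plan is to use the previous proposition: $\sC_e$ has infinite VC dimension if and only if for every $d$ there exist indices $e_0,\dots,e_{d-1}$ of total computable $\{0,1\}$-valued functions $x_i=\varphi_{e_i}$ and concept indices $n_1,\dots,n_{2^d}$ for which $\{c_{e,n_j}\}$ shatters $\{x_i\}$. Naively the totality clause alone is $\Pi^0_2$, so existentially quantifying over $\vec e,\vec n$ yields a $\Sigma^0_3$ predicate and the outer $\forall d$ apparently pushes the complexity to $\Pi^0_4$. The central task is therefore to reformulate ``total and shattered'' so that the existential produces only a $\Sigma^0_2$ predicate, after which $\forall d$ delivers the target $\Pi^0_3$.

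My approach is to introduce a decidable finite-stage witness predicate. Define $\mathrm{Witness}(e,d,\vec e,\vec n,\ell)$ to hold provided: (i) for each $i<d$ and each $k\le\ell$, $\varphi_{e_i}(k)[\ell]$ has converged to a value in $\{0,1\}$, yielding a computed prefix $\sigma_i=\varphi_{e_i}\upharpoonright\ell$; and (ii) for every $S\subseteq[d]$ some $j\le 2^d$ is ``consistent with $S$ at stage $\ell$,'' meaning $\sigma_i\in T_{e,n_j}[\ell]$ for each $i\in S$ while for each $i\notin S$ some initial segment of $\sigma_i$ has already been enumerated out of $T_{e,n_j}[\ell]$, where $T_{e,n_j}[\ell]$ denotes the stage-$\ell$ approximation (a decreasing sequence of computable trees with intersection $T_{e,n_j}$). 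Since only finitely many finite-stage computations are inspected, $\mathrm{Witness}$ is decidable in all its arguments.

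The key claim is that $\mathrm{Tot}(\vec e)\wedge\mathrm{Shatter}(e,d,\vec e,\vec n)$ is equivalent to $\exists\ell_0\,\forall\ell\ge\ell_0\,\mathrm{Witness}(e,d,\vec e,\vec n,\ell)$. The forward direction is immediate from totality, stabilization of the computations, and monotonicity of the $\Pi^0_1$ tree approximations. The reverse direction is the main obstacle. Clause (i) at all large $\ell$ gives totality of each $\varphi_{e_i}$. For each fixed $S$, clause (ii) produces a witness $j_\ell(S)\in[2^d]$ at every $\ell\ge\ell_0$, and since $[2^d]$ is finite, some $j^*=j^*(S)$ is consistent at infinitely many stages; for those stages, the inclusion $\sigma_i\in T_{e,n_{j^*}}[\ell]$ (for $i\in S$) propagates by the tree property and by $T[\ell]\supseteq T[\ell']\supseteq T$ for $\ell\ge\ell'$ to force every prefix of $\varphi_{e_i}$ into the true tree $T_{e,n_{j^*}}$, giving $\varphi_{e_i}\in c_{e,n_{j^*}}$, while the exclusion witness for $i\notin S$ transfers directly since $T_{e,n_{j^*}}\subseteq T_{e,n_{j^*}}[\ell]$. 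Thus $j^*$ realizes the pattern $S$ and actual shattering holds.

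Granted the equivalence, $\exists\vec e,\vec n\,[\mathrm{Tot}\wedge\mathrm{Shatter}]$ rewrites as $\exists\vec e,\vec n,\ell_0\,\forall\ell\ge\ell_0\,\mathrm{Witness}$, a $\Sigma^0_2$ predicate in $(e,d)$. Prefixing $\forall d$ yields the desired $\Pi^0_3$ formula, equivalent on $L$ to infinite VC dimension.
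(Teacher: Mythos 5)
Your overall strategy --- absorb the $\Pi^0_2$ totality requirement into a single $\Sigma^0_2$ ``eventually always'' witness so that the outer $\forall d$ lands at $\Pi^0_3$ rather than $\Pi^0_4$ --- is the right instinct, and your reverse-direction argument (pigeonhole on $j$, then passing inclusions through the decreasing tree approximations) is fine. The gap is in clause (i) of $\mathrm{Witness}$, and hence in the forward direction of your key equivalence. Requiring $\varphi_{e_i}(k)[\ell]\!\downarrow$ for \emph{every} $k\le\ell$ and \emph{every} $\ell\ge\ell_0$ forces, taking $k=\ell$, that $\varphi_{e_i}(\ell)$ halt within $\ell$ steps for almost all $\ell$; that is a real-time computability requirement, not totality. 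There are total $\{0,1\}$-valued computable functions no index of which satisfies it: for instance $f(\ell)=1-\varphi_{(\ell)_0}(\ell)[\ell]$ when this converges and $0$ otherwise is total and computable, yet any index $e$ for $f$ must have $\varphi_e(\ell)[\ell]\!\uparrow$ for every $\ell$ with $(\ell)_0=e$. So ``total and shattered'' does not imply your $\exists\ell_0\forall\ell$ condition, and one can build an effective concept class of infinite VC dimension whose shattered sets consist only of such slow points; your $\Pi^0_3$ formula then defines the wrong set. The obvious repair --- let $\sigma_i$ be the longest prefix converged by stage $\ell$ --- does not work either, since totality then reappears as a genuine $\Pi^0_2$ side condition, and $\mathrm{Tot}$ is $\Pi^0_2$-complete, so no $\Sigma^0_2$ witness predicate can encode it.

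The clean way out is to stop quantifying over indices of total functions and use compactness of the concepts instead: VC dimension at least $d$ holds iff there are concept indices $n_{P(S)}$, one for each $S\subseteq\{1,\dots,d\}$, such that for each $i$ the set $\bigl(\bigcap_{S\ni i}c_{n_{P(S)}}\bigr)\cap\bigl(\bigcap_{S\not\ni i}\overline{c_{n_{P(S)}}}\bigr)$ is nonempty. The first intersection is uniformly a $\Pi^0_1$ class $P_i$, the second is uniformly effectively open, and ``$P_i\cap[\sigma]\neq\emptyset$'' is $\Pi^0_1$ by K\"onig's lemma, so nonemptiness is of the form $\exists\sigma(\Sigma^0_1\wedge\Pi^0_1)$, i.e.\ $\Sigma^0_2$, and the outer $\forall d$ gives $\Pi^0_3$ (this also sidesteps the reduction to computable witnesses). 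For what it is worth, the paper's own proof quantifies over computable points and asserts the resulting formula is $\Pi^0_3$ without addressing the totality issue you correctly isolated, so your diagnosis of where the difficulty lies is on target; it is the particular witness predicate you chose that fails.
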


\begin{proof}
We begin by noting that if $f$ is a computable function and $T$
is a $\Pi^0_1$ tree, then it is a $\Pi^0_1$ condition that $f$ is a
path of $T$, and a $\Sigma^0_1$ condition that it is not, uniformly in
a $\Pi^0_1$ index for $T$ and a computable index for $f$.  Further, if
$\sC = \varphi_e$ is an effective concept class, then for any $k \in
\omega$, the condition that $k \in ran(\varphi_e)$ is a $\Sigma^0_1$
condition, uniformly in $e$ and $k$. 

Let $(x_1,
\dots, x_n)$ be a sequence of computable functions, $S \subseteq
\{1,2, \dots, n\}$, and $c$ a $\Pi^0_1$ class, represented by a
$\Pi^0_1$ index for a tree in which it is the set of paths.  We
abbreviate by $c \upharpoonright_n = S$ the statement that for each $i
\in \{1,\dots n\}$, we have $x_i \in c$ if and only if $i \in S$.  Now
$c \upharpoonright_n = S$ is a $d$-$\Sigma^0_1$ condition, uniformly
in the indices for the $x_i$ and $c$.

We now note that $\sC = \varphi_e$ has infinite VC dimension if and
only if \[\bigwedge\limits_{n \in \mathbb{N}}\hspace{-0.15in}\bigwedge
\exists x_1, \dots, x_n \bigwedge\limits_{S \subseteq (n+1)} \exists k
\left[\varphi_e(k)\upharpoonright_n = S\right] .\]  From the comments
above, this definition is $\Pi^0_3$.
\end{proof}

\section{Sharpness of the Bound}

The completeness result in this section will finish our answer to the
main question of the paper.  

\begin{thm} The set of indices for effective concept classes of
  infinite VC dimension is $m$-complete $\Pi^0_3$ within $L$, and the
  set of indices for effective concept classes of finite VC dimension
  is $\Sigma^0_3$ within $L$.\end{thm}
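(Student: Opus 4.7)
The $\Sigma^{0}_{3}$-within-$L$ half of the theorem will be immediate from the preceding proposition: if the infinite-VC-dimension indices coincide with $L \cap R$ for some $\Pi^{0}_{3}$ set $R$, then the finite-VC-dimension indices coincide with $L \cap R^{c}$, and $R^{c}$ is $\Sigma^{0}_{3}$. So the real content of the theorem is the $m$-completeness half, and that is where the effort will go.

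For completeness, I will $m$-reduce an arbitrary $\Pi^{0}_{3}$ predicate $P(n) \equiv \forall i\, \exists j\, \forall k\, R(n,i,j,k)$ (with $R$ computable) to the infinite-VC-dimension set, producing uniformly in $n$ an index $e(n) \in L$ for an effective concept class $\sC_{n}$ whose VC dimension is infinite exactly when $P(n)$ holds. The construction will arrange disjoint ``slots'': fix pairwise incompatible strings $\rho_{0}, \rho_{1}, \ldots \in 2^{<\omega}$ and distinguished computable points $p_{i} \in [\rho_{i}]$. For each tuple $\mu = (d, S, j_{0}, \ldots, j_{d-1})$ with $S \subseteq \{0, \ldots, d-1\}$, enumerate into $\sC_{n}$ the concept coded by the tree $T_{n,\mu}$ consisting of all $\tau \in 2^{<\omega}$ which both (i) are a prefix of $p_{i}$ for some $i \in S$, and (ii) satisfy $R(n, i, j_{i}, k)$ for every $i < d$ and every $k \leq |\tau|$.

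The routine book-keeping step will be to verify that $T_{n,\mu}$ is a computable subtree of $2^{<\omega}$, that its set of infinite paths is exactly $\{p_{i} : i \in S\}$ when each $A_{i,j_{i}}(n) := \forall k\, R(n,i,j_{i},k)$ holds for $i < d$, and that it is empty otherwise (since a first violation of (ii) truncates the tree at finite height). Consequently each concept is a finite set of computable points or empty --- hence a computable $\Pi^{0}_{1}$ class --- and $\sC_{n}$ is genuinely an effective concept class, with $e(n)$ obtained computably from $n$.

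The substantive step is to show that the VC dimension of $\sC_n$ is infinite if and only if $P(n)$. The forward direction is clean: if $P(n)$ holds, then for each $d$ one picks $\vec{j}$ making every $A_{i,j_{i}}(n)$ hold, and varying $S$ realises every subset of $\{p_{0}, \ldots, p_{d-1}\}$ as a concept, shattering that set. The converse is the main obstacle. One must first observe that no point of $2^{\omega} \setminus \{p_{0}, p_{1}, \ldots\}$ belongs to any concept, so any shattered set $Y$ lies in $\{p_{0}, p_{1}, \ldots\}$; writing $Y = \{p_{i_{1}}, \ldots, p_{i_{d}}\}$ with largest index $i_{d}$, the concept realising $c \cap Y = Y$ must be nonempty with $S \supseteq \{i_{1}, \ldots, i_{d}\}$, which forces $\exists j\, A_{i,j}(n)$ for every $i \leq i_{d}$. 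Since infinite VC dimension forces $i_{d}$ to grow without bound, this recovers the full $\forall i$ of $P(n)$. The delicacy is precisely in ruling out spurious shattering of sets involving non-$p_{i}$ points and ensuring that the maximal index in any shattered set grows with its size, so that $P$'s universal quantifier is genuinely matched.
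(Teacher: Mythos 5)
Your argument is correct, and it reaches the completeness half by a genuinely different route from the paper's. The paper puts the given $\Pi^0_3$ set in the form $\exists^\infty t\,\forall y\,R(t,y,n)$, applies the Limit Lemma to obtain a computable approximation $f_s$, and runs a stage-by-stage construction: whenever $f_s(t)=1$ it launches a fresh batch of $2^t$ trees over a new, pairwise disjoint set of $t$ witness points $\pi_{t,k,j}$, truncating those trees (and so killing their paths) if the approximation later reverts to $0$; it then needs a separate argument, resting on the disjointness of witness sets across attempts, that infinitely many failed attempts at level $t$ do not inflate the VC dimension beyond $t$. You instead work directly from the $\forall i\,\exists j\,\forall k$ normal form and give a static, stage-free definition: the concept indexed by $(d,S,\vec{\jmath})$ has infinite paths exactly when the $\Pi^0_1$ conjunction $\bigwedge_{i<d}\forall k\,R(n,i,j_i,k)$ holds, the existential quantifier being absorbed by enumerating all tuples $\vec{\jmath}$, and all shattering takes place on initial segments of one fixed sequence $p_0,p_1,\dots$ of isolated computable points. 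This dispenses with the Limit Lemma and the movable-witness bookkeeping, and your bound on the VC dimension when $P(n)$ fails at $i_0$ is a one-line counting argument (every nonempty concept is contained in $\{p_0,\dots,p_{i_0-1}\}$, so no larger set can have its top subset realized). Both constructions share the essential device --- trees whose survival encodes a $\Pi^0_1$ fact, plus the observation that shattering a $d$-set requires $2^d$ surviving concepts --- and both produce classes whose concepts are finite sets of computable points, hence effective concept classes in the paper's (non-uniform) sense even though one cannot decide which case each concept falls into. The paper's dynamic version is the more standard recursion-theoretic template and adapts more readily to settings where one must genuinely act on approximations; yours is shorter and cleaner for this particular statement.
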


\begin{proof} It only remains to show completeness.  For each
  $\Pi^0_3$ set $S$, we will construct a sequence of effective concept
  classes $\left(\sC_n : n \in \mathbb{N}\right)$ such that $\sC_n$
  has infinite VC dimension if and only if $n \in S$.  In the
  following lemma, to simplify notation, we suppress the dependence of
  $f$ on $n$.

\begin{lem} There is a $\Delta^0_2$ function $f:\mathbb{N} \to 2$ such
  that $f(s) = 1$ for infinitely many $s$ if and only if $n \in
  S$.\end{lem}

\begin{proof} 
It suffices (see \cite{soare}) to consider $S$ of the form $\exists^\infty x \forall y
R(x,y,n)$.  Now we set \[f(x) = \left\{\begin{array}{ll} 1 & \mbox{if
  $\forall y R(x,y,n)$}\\ 0 &
\mbox{otherwise}\\ \end{array}\right. .\]
This function is $\Delta^0_2$-computable, and has the necessary properties.
\end{proof}

Now by the Limit Lemma, there is a uniformly computable sequence $\left(f_s : s \in
\mathbb{N}\right)$ of functions such that for each $x$, for
sufficiently large $s$, we have $f_s(x) = f(x)$.

We now take a set of functions that will serve as the elements that
may eventually witness high VC dimension.  Let $\left\{\pi_{s,t,j} :
s,t,j \in \mathbb{N}, j<s\right\}$ be a discrete uniformly computable set of
distinct elements of $2^\omega$ such that $\pi_{s,t,j}(q) =
\pi_{s,t',j'}(q)$ whenever $q < \min\{t,t'\}$.  

We also initialize $G_{s,0} = \emptyset$ for each $s$.  Denote by
$P_t$ a bijection \[P_t:\sP\left(\{1, \dots, t\}\right) \to \{1, \dots, 2^t\}.\]

At stage $s$ of the construction, we consider $f_s(t)$ for each $t\leq
s$.  If $f_s(t) = 0$, then no action is required.

If $f_s(t) = 1$,
then we find the least $k$ such that $k \notin G_{t,s}$.  Let
$\{e_{t,i} : i<2^t\}$ be $\Pi^0_1$ indices for trees such that
$T_{e_{t,i}}$ consists exactly of the initial segments $\tau$ of
$\pi_{t,k,j}$ where $j = P_t(S)$ for some $S \subseteq \{1, \dots,
t\}$ and $|\tau|$ is less than the
first $z>s$ such that $f_z(t) = 0$.  This can be done effectively
exactly because we are looking for $\Pi^0_1$ indices, and the search
is uniform.  We then let $i_s$ be the least such that $\sC_n(i_s)$ is
undefined, and take $\sC_n(i_s+\ell) = e_{t,\ell}$ for each
$\ell<2^t$.  We also set $G_{t,s} = G_{t,s-1} \cup k$.

Now for each $t$ with $f(t) = 1$, there will be some $s$ such that
$f_{s'}(t) = f_s(t) = 1$ for all $s'>s$.  Then at stage $s$ we have
added to $\sC_n$ the $\Pi^0_1$ indices $\{e_{t,i} : i <2^t\}$
guaranteeing that $\{\pi_{t,k,j} : j<t\}$ is shattered for some $k$.

For each $t$ such that $f(t) = 0$ and each $s$ such that $f_s(t) = 1$,
there is some later stage $s'$ such that $f_{s'}(t) = 0$, so any
indices added at stage $s$ will be indices for a tree with no paths
--- that is, for the empty concept.

Note that if the same $t$ receives attention infinitely often --- that
is, if infinitely many different sets of classes are added to $\sC_n$
to guarantee that the VC dimension of $\sC_n$ is at least $t$, this
does not inflate the VC dimension beyond $t$.  Indeed, the sets of
witnesses will be pairwise disjoint, so no concept in $\sC_n$ will
include any mixture of witnesses from different treatments; the
resulting sets will not be shattered.

We further note that all the $\Pi^0_1$ classes in $\sC_n$ are
computable.  Indeed, each $c \in \sC_n$ consists of finitely many
(perhaps no) computable paths.  Thus, $\C_n$ is an effective concept
class.

Now if $n \notin S$, then $f(s) = 1$ for at most finitely many $s$, so
that the VC dimension of $\sC_n$ is finite.  If $n \in S$, then $f(s)
= 1$ for infinitely many $s$, so that the VC dimension of $\sC_n$ is
infinite (since sets of arbitrarily large size will be shattered).
\end{proof}

\bibliographystyle{amsplain}
\bibliography{pac}

\end{document}